\title[
    Easy scalar decompositions 
]{
    Easy scalar decompositions
    \\
    for efficient scalar multiplication
    \\
    on elliptic curves and genus 2 Jacobians
}
\author{Benjamin Smith}
\address{
    INRIA \& \'Ecole polytechnique. 
    \'Equipe-Projet GRACE,
    INRIA Saclay -- \^Ile-de-France.
    Laboratoire d'Informatique (LIX),
    1 rue Honor\'e d'Estienne d'Orves,
    Campus de l'\'Ecole polytechnique,
    91120 Palaiseau,
    France
}
\date{\today}
\newcommand{\ZZ}{\mathbb{Z}}
\newcommand{\QQ}{\mathbb{Q}}
\newcommand{\QQbar}{\overline{\QQ}}
\newcommand{\FF}{\mathbb{F}}
\newcommand{\OK}[1][{K}]{\mathcal{O}_{#1}}
\newcommand{\EC}{\mathcal{E}}
\newcommand{\ECK}{\widetilde{\mathcal{E}}}
\newcommand{\Jac}[1]{\ensuremath{\mathcal{J}_{#1}}}
\newcommand{\C}{\mathcal{C}}
\newcommand{\A}{\mathcal{A}}
\newcommand{\G}{\mathcal{G}}
\newcommand{\vv}[1]{\mathbf{#1}}
\newcommand{\Lattice}{\mathcal{L}}
\newcommand{\End}{\mathrm{End}}
\newcommand{\Oh}{O}
\newcommand{\subgrp}[1]{\left\langle{#1}\right\rangle}
\newcommand{\roundoff}[1]{\left\lfloor{#1}\right\rceil}
\newcommand{\dualof}[1]{{{#1}^\dagger}}
\newcommand{\pidual}{\dualof{\pi}}
\newcommand{\conj}[2][\sigma]{{{}^{#1}{#2}}}
\newcommand{\Legendre}[2]{\genfrac{(}{)}{}{}{#1}{#2}}
\newcommand{\Disc}[1]{{\Delta({#1})}}
\newcommand{\Tr}[1]{\ensuremath{t_{#1}}}
\newcommand{\N}[1]{\ensuremath{n_{#1}}}
\newcommand{\tE}[1][\pi]{\Tr{#1}}
\newcommand{\cpol}[1]{P_{#1}}
\newcommand{\cond}[1]{c_{#1}}
\theoremstyle{plain}
\newtheorem{theorem}{Theorem}
\newtheorem{lemma}[theorem]{Lemma}
\theoremstyle{remark}
\newtheorem{remark}[theorem]{Remark}
\theoremstyle{definition}
\newtheorem{definition}{Definition}
\newtheorem{example}{Example}
\begin{document}

\begin{abstract}
    The first step in elliptic curve scalar multiplication algorithms
    based on scalar decompositions using efficient
    endomorphisms---including Gallant--Lambert--Vanstone (GLV) and 
    Galbraith--Lin--Scott (GLS) multiplication, as well as
    higher-dimensional and higher-genus 
    constructions---is to produce a short basis of a certain 
    integer lattice involving the eigenvalues of the endomorphisms.
    The shorter the basis vectors, the shorter the decomposed scalar
    coefficients, and the faster the resulting scalar multiplication.
    Typically, knowledge of the eigenvalues allows us to write down a
    long basis, which we then reduce using the Euclidean algorithm,
    Gauss reduction,
    LLL, or even a more specialized algorithm.

    In this work, 
    we use elementary facts about quadratic rings
    to immediately write down a short basis of the
    lattice for 
    the GLV, GLS, GLV+GLS, and \(\QQ\)-curve constructions
    on elliptic curves,
    and for genus 2 real multiplication constructions.
    We do not pretend that this represents a significant optimization
    in scalar multiplication, since the lattice reduction step
    is always an offline precomputation---but it does give
    a better insight into the structure of scalar decompositions.
    In any case, it is always more convenient 
    to use a ready-made short basis
    than it is to compute a new one.
\end{abstract}

\maketitle

\section{
    Introduction
}
\label{sec:introduction}

Scalar multiplication on elliptic curves (or Jacobians of genus 2
curves) is a key operation in many modern asymmetric cryptographic
primitives.
The classic scenario is as follows:
let \(\G \subset \A(\FF_{q})\)
be a cyclic subgroup of order \(N\),
where
\(\A\) is an elliptic curve or an abelian surface
over a finite field~\(\FF_{q}\).
Given an integer \(m\) (typically on the order of \(N\))
and a point \(P\) in \(\G\),
our goal is to compute 
\[
    [m]P := \underbrace{P + P + \cdots + P}_{m \text{ times }}
\]
as quickly as possible.

Since elliptic curve scalar multiplication is analogous to 
exponentiation in finite fields, many algorithms originally developed 
with the multiplicative groups of finite fields (or general abelian
groups) in mind 
transfer directly to scalar multiplication:
square-and-multiply loops in finite fields
become double-and-add loops on elliptic curves, for example.
However, the geometry of elliptic curves can offer us new 
algorithms with no true finite field analogues.  
A spectacular (and easy) example of this phenomemon is
scalar multiplication with endomorphism decompositions,
originally proposed by 
Gallant, Lambert, and Vanstone~\cite{Gallant--Lambert--Vanstone}.
We present a general version of their
idea below that is flexible enough to accommodate higher-dimensional and
higher-genus constructions.

\subsection*{The general scalar decomposition technique}
\label{sec:general}

Let \(\A\), \(\G\), and \(N\) be as above,
and
let \(\phi_1,\ldots,\phi_r\) be \(\FF_{q}\)-endomorphisms of \(\A\).
We lose nothing by supposing \(\phi_1 = 1\).
In contrast to~\cite{Longa--Sica}, we do \emph{not} suppose that the
\(\phi_i\) form a linearly independent set.

Suppose that \(\phi_i(\G) \subseteq \G\) for \(1 \le i \le r\)
(this is the typical situation in cryptographic applications, where \(N\) is so
close to \(\#\A(\FF_{q})\) that there is no room for the image of \(\G\)
to be anything but \(\G\) itself); then 
each \(\phi_i\) restricts to an endomorphism of \(\G\).  
But \(\G\) is a cyclic group, isomorphic to \(\ZZ/N\ZZ\), 
and as such each of its endomorphisms is multiplication by some integer 
(defined modulo \(N\)).  
In particular, each endomorphism \(\phi_i\) 
acts on \(\G\) as multiplication by an
integer \emph{eigenvalue} \(-N/2 < \lambda_{\phi_i} \le N/2\),
such that 
\[
    \phi_i|_\G = [\lambda_{\phi_i}]_\G
    \ .
\]

\begin{definition}
    Let \(D\) be the \(\ZZ\)-module homomorphism
    \[
        \begin{array}{rrcl}
            D : &
            \ZZ\phi_1 \oplus \cdots \oplus \ZZ\phi_r &
            \longrightarrow &
            \ZZ/N\ZZ
            \\
            &
            \left(a_1,\ldots,a_r\right) &
            \longmapsto &
            a_1\lambda_{\phi_1} + \cdots + a_r\lambda_{\phi_r}
            \ ;
        \end{array}
    \]
    an \(r\)-dimensional\footnote{
        We emphasize that the dimension of \(r\) of a decomposition 
        has no relation to the dimension of \(\A\),
        or to the \(\ZZ\)-rank of the endomorphism ring.
        Typical values for \(r\) are 
        \(r = 1\), corresponding to classical scalar multiplication;
        \(r = 2\), 
        as in in Gallant--Lambert--Vanstone~\cite{Gallant--Lambert--Vanstone}
        and Galbraith--Lin--Scott~\cite{Galbraith--Lin--Scott}
        multiplication;
        and
        \(r = 4\), as proposed by Longa and Sica~\cite{Longa--Sica}
        and Guillevic and Ionica~\cite{Guillevic--Ionica}.
        A technique with \(r = 3\) was proposed by
        Zhou, Hu, Xu, and Song~\cite{Zhou--Hu--Xu--Song},
        but this is essentially Longa--Sica with \(a_4 = 0\).
        Bos, Costello, Hisil, and Lauter
        have implemented a genus 2 scalar multiplication with \(r = 8\), 
        but this seems to be the upper limit of practicality for these
        techniques~\cite{Bos--Costello--Hisil--Lauter}.
    } \emph{decomposition} of a scalar \(m\)  
    is any element of \(D^{-1}(m)\).  
\end{definition}

Returning to the scalar multiplication problem:
we can compute \([m]P\) for any \(P\) in \(\G\) by
using a
multiexponentiation algorithm
on the points \(\phi_1(P),\ldots,\phi_r(P)\)
to compute
\[
    [m]P = [a_1]\phi_1(P) \oplus \cdots \oplus [a_r]\phi_r(P)
    \qquad
    \text{for any}
    \quad
    (a_1,\ldots,a_r) \in D^{-1}(m)
    \ .
\]
    
The literature on exponentation and multiexponentiation algorithms
is vast, and we will not attempt to summarize it here
(but for an introduction to general exponentation and multiexponentation
algorithms, we recommend~\cite[\S2.8,\S11.2]{Galbraith} 
and~\cite[Chapter 9]{Handbook}).
For the purposes of this article,
it suffices to note that
for the scalar decomposition technique to offer 
an advantage over simply computing \([m]P\)
as a conventional exponentiation,
\begin{description}
    \item[The endomorphisms 
        must be \emph{efficient}]
        that is, any \(\phi_i(P)\) must be computable 
        for the cost of a few group operations, and 
    \item[The decomposition 
        must be \emph{short}]
        that is, 
        \[
            \|(a_1,\ldots,a_r)\|_\infty = \max_i|a_i|
        \]
        should be significantly smaller than \(|m|\),
        which is typically on the order of \(N\).
\end{description}

In this article, 
we suppose that we are given a fixed set of efficient \(\phi_i\),
and concentrate on the problem of computing short scalar decompositions.
First, consider the lattice of decompositions of \(0\):
\[
    \Lattice := \ker D = \subgrp{
        (z_1,\ldots,z_r) \in \ZZ^r
        \mid
        z_1\lambda_{\phi_1} + \cdots + z_r\lambda_{\phi_r}
        \equiv 0 \pmod{N}
    }
    \ .
\]
The set of decompositions of any \(m\) in \(\ZZ/N\ZZ\) is then
the lattice coset
\[
    D^{-1}(m) = (m,0,\ldots,0) + \Lattice
    \ .
\]

To find a short decomposition of \(m\),
we can subtract a nearby vector in \(\Lattice\) from \((m,0,\ldots,0)\).
The reference technique for finding such a vector in \(\Lattice\)
is Babai rounding~\cite{Babai}, which works as follows:
if \(\vv{b}_1,\ldots,\vv{b}_r\) is a basis for \(\Lattice\),
then we let \((\alpha_1,\ldots,\alpha_{r})\) be the (unique) solution in
\(\QQ^r\) to the linear system 
\[
    (m,0,\ldots,0) = \sum_{i=1}^r \alpha_i\vv{b}_i 
    \ ,
\]
and set
\[
    (a_1,\ldots,a_r) 
    := 
    (m,0,\ldots,0) - \sum_{i=1}^r \roundoff{\alpha_i}\vv{b}_i
    \ ;
\]
then \((a_1,\ldots,a_r)\) is an \(r\)-dimensional decomposition of
\(m\).
Since 
\[
    (a_1,\ldots,a_r) 
    = 
    \sum_{i=1}^r\left(\alpha_i - \roundoff{\alpha_i}\right)\vv{b}_i
\]
and \(|x - \roundoff{x}| \le 1/2\) for any \(x\) in \(\QQ\),
we have
\[
    \|(a_1,\ldots,a_r)\|_\infty 
    \le 
    \frac{r}{2}\max_i\|\vv{b}_i\|_\infty
    \ .
\]

It is clear, therefore,
that finding short decompositions depends on finding a short basis
for \(\Lattice\).
Note that \(\Lattice\) depends only on the \(\phi_i\), 
and not on the eventual scalars \(m\) or points \(P\) to be multiplied;
as such,
the short basis can (and should) be precomputed.
Assuming that the eigenvalues 
have pairwise differences of absolute value at least \(N^{1/r}\),
there exists a basis with \(\max_i\|\vv{b}\|_\infty\) in \(O(N^{1/r})\),
which will yield scalar decompositions of bitlength around \(\frac{1}{r}\log_2N\).

In most of the scalar decomposition literature,
a short basis of \(\Lattice\) is produced by starting with a long basis
---typically the basis
\begin{align*}
    \vv{b}_1 & = (N,0,\ldots,0) 
    \ , 
    \\
    \vv{b}_2 & = (-\lambda_{\phi_2},1,0,\ldots,0) 
    \ , 
    \\
    \vv{b}_3 & = (-\lambda_{\phi_3},0,1,0,\ldots,0) 
    \ , 
    \\
    & \vdots 
    \\
    \vv{b}_r & = (-\lambda_{\phi_r},0,\ldots,1) 
\end{align*}
---before applying a lattice reduction
algorithm to produce a short basis.
For \(r = 2\), Gallant, Lambert, and Vanstone 
used the Euclidean algorithm,
which is equivalent to the usual Gauss lattice reduction algorithm 
(though Kaib's algorithm for the infinity norm~\cite{Kaib} 
may give marginally better results).
In higher dimensions, we would typically use a fast LLL variant 
(such as \texttt{fpLLL}~\cite{fpLLL})---though 
Longa and Sica~\cite{Longa--Sica} went so 
far as to propose a new 4-dimensional lattice basis reduction algorithm 
for their GLV+GLS construction on elliptic curves.

\subsection*{Our contribution: ready-made short bases}

Our contention in this article is that in most cryptographic situations,
no lattice basis reduction is required to find a short basis
of \(\Lattice\): one can simply write
down vectors of length at most \(O(\#\A(\FF_{q})^{1/r})\) 
from scratch.  The information that allows us to do so is typically a
by-product of the group order computation
(or of the CM method).
For the abelian varieties most useful in cryptography,
these vectors either form a basis for \(\Lattice\),
or can be easily modified to do so.

Galbraith, Lin, and Scott~\cite{Galbraith--Lin--Scott} and the
author~\cite{Smith-QQ} have already constructed families of endomorphisms 
equipped with a convenient ready-made basis; in this work,
we generalize these ready-made bases to all of the other known
efficient endomorphism constructions for elliptic curves and 
to real multiplication techniques for genus 2 Jacobians.
In this way, we construct explicit short bases for
the
Galbraith--Lin--Scott (GLS), 
Gallant--Lambert--Vanstone (GLV), 
Guillevic--Ionica~\cite{Guillevic--Ionica},
Longa--Sica,
and \(\QQ\)-curve reduction techniques,
as well as for the 
Kohel--Smith~\cite{Kohel--Smith}
and Takashima~\cite{Takashima}
methods for genus 2 Jacobians.

We do not pretend that this is a significant optimization for scalar
decomposition methods: the construction of a short basis is essentially
a one-shot precomputation, and existing lattice basis reduction
methods are certainly fast enough on the relevant input sizes.  
However, the construction of these ``instant'' bases
turns out to be an illuminating exercise:
short bases can be read off from
they are simple endomorphism ring relations that are,
in practice, known in advance.
The bottom line is that it is always more
convenient to not compute something than it is to compute it.

\section{
    Relations between quadratic orders
}

We recall some elementary facts from the theory of quadratic fields.
Further details and proofs can be found in almost any basic algebraic
number theory text (we recommend~\cite{Swinnerton-Dyer}).

Let \(K\) be a quadratic field, real or imaginary, 
with maximal order \(\OK\)
and discriminant \(\Delta_K\).
If \(\xi\) is an element of \(\OK\)
then we write 
\(\Tr{\xi}\) for its trace,
\(\N{\xi}\) for its norm.
If \(\xi\) is not in \(\ZZ\),
then it generates an order \(\ZZ[\xi]\) in \(\OK\);
we write \(\Disc{\xi} = \Tr{\xi}^2 - 4\N{\xi}\) 
for the discriminant of \(\ZZ[\xi]\),
and \(\cpol{\xi}(T) = T^2 - \Tr{\xi}T + \N{\xi}\) 
for the minimal polynomial of \(\xi\).
The discriminants of \(\OK\) and \(\ZZ[\xi]\) are related by
\(\Disc{\xi} = \cond{\xi}^2\Delta_K\)
for some positive integer \(\cond{\xi}\),
the conductor of \(\ZZ[\xi]\) in \(\OK\).

The set of orders in \(K\) form a lattice (in the
combinatorial sense), indexed by the conductor: \(\ZZ[\xi] \subset
\ZZ[\xi']\) if and only if \(\cond{\xi'} \mid \cond{\xi}\).
If \(\ZZ[\xi] \subset \ZZ[\xi']\) are orders in \(K\),
then necessarily
\begin{equation}
    \label{eq:relation-1}
    \xi = c\xi' + b 
\end{equation}
for some integers \(b\) and \(c\).
It follows that
\begin{equation}
    \label{eq:b-and-c}
    b = \frac{1}{2}\left(\Tr{\xi} - c\Tr{\xi'}\right)
    \qquad
    \text{and}
    \qquad
    c^2 = \frac{\Disc{\xi}}{\Disc{\xi'}}
    \ .
\end{equation}
Note that \(c\) is, up to sign, the relative conductor of
\(\ZZ[\xi]\) in \(\ZZ[\xi']\).
Multiplying Eq.~\eqref{eq:relation-1}
through by \(\Tr{\xi'} - \xi'\),
which is also \(\N{\xi'}/\xi'\),
we obtain a second relation
\begin{equation}
    \label{eq:relation-2}
    \xi\xi' - \Tr{\xi'}\xi - b\xi' + (c\N{\xi'} + b\Tr{\xi'}) 
    = 
    0
    \ .
\end{equation}
The following lemma turns the relations between endomorphisms
of Eqs.~\eqref{eq:relation-1} and~\eqref{eq:relation-2}
into relations between eigenvalues, which we will use later to
produce short lattice vectors.

\begin{lemma}
    \label{lemma:relations}
    Let \(\xi\) and \(\xi'\) be 
    endomorphisms of an abelian variety \(\A/\FF_{q}\)
    such that \(\ZZ[\xi]\) and \(\ZZ[\xi']\) are quadratic rings
    and \(\ZZ[\xi] \subseteq \ZZ[\xi']\),
    so
    \(\xi = c\xi' + b\)
    for some integers \(b\) and \(c\).
    Let \(\G\subset\A\) be a cyclic subgroup of order \(N\)
    such that \(\xi(\G) \subseteq \G\) and \(\xi'(\G) \subseteq \G\),
    and let \(\lambda\) and \(\lambda'\) be the eigenvalues in
    \(\ZZ/N\ZZ\) of \(\xi\) and \(\xi'\) on \(\G\), respectively.
    Then 
    \begin{align*}
        \lambda - c\lambda' - b 
        & \equiv 0 \pmod{N}
        \quad \text{and}
        \\
        \lambda\lambda' - \Tr{\xi'}\lambda - b\lambda' +
        c\N{\xi'} + b\Tr{\xi'}
        & \equiv 0 \pmod{N}
        \ .
    \end{align*}
\end{lemma}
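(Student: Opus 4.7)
The plan is to restrict the endomorphism identities of Eqs.~\eqref{eq:relation-1} and~\eqref{eq:relation-2} to the cyclic subgroup \(\G\), and then simply read off the claimed congruences from the fact that any endomorphism of \(\G \cong \ZZ/N\ZZ\) is multiplication by an integer that is determined modulo \(N\). The key observation is that on \(\G\), compositions and \(\ZZ\)-linear combinations of endomorphisms become the corresponding products and linear combinations of eigenvalues modulo \(N\).

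Fix a generator \(P\) of \(\G\), so \(\xi(P) = [\lambda]P\) and \(\xi'(P) = [\lambda']P\). For the first congruence, Eq.~\eqref{eq:relation-1} says that \(\xi - c\xi' - b\) vanishes in \(\End(\A)\); evaluating on \(P\) and using the eigenvalue actions gives \((\lambda - c\lambda' - b)P = 0\), so \(\lambda - c\lambda' - b \equiv 0 \pmod{N}\) since \(P\) has order \(N\).

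For the second congruence, I apply the same principle to Eq.~\eqref{eq:relation-2}. Here the only mild care needed is for the composite \(\xi\xi'\): because \(\xi'(\G) \subseteq \G\), we have \(\xi\xi'(P) = \xi([\lambda']P) = [\lambda'][\lambda]P = [\lambda\lambda']P\). The remaining terms \(\Tr{\xi'}\xi\), \(b\xi'\), and the scalar \(c\N{\xi'} + b\Tr{\xi'}\) restrict to multiplications by \(\Tr{\xi'}\lambda\), \(b\lambda'\), and \(c\N{\xi'} + b\Tr{\xi'}\), respectively. Summing and evaluating Eq.~\eqref{eq:relation-2} at \(P\) yields
\[
    \bigl(\lambda\lambda' - \Tr{\xi'}\lambda - b\lambda' + c\N{\xi'} + b\Tr{\xi'}\bigr)P = 0,
\]
and the order of \(P\) gives the second congruence.

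There is no real obstacle here: the lemma is a bookkeeping exercise whose only non-trivial ingredient is the standing hypothesis \(\xi'(\G) \subseteq \G\), which is precisely what legitimises restricting the composition \(\xi \circ \xi'\) to \(\G\) term by term.
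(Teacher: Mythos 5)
Your proof is correct and is essentially the paper's own argument, unpacked: the paper phrases it as mapping Eqs.~\eqref{eq:relation-1} and~\eqref{eq:relation-2} through the ring homomorphism \(\ZZ[\xi']\to\End(\G)\cong\ZZ/N\ZZ\) sending \(\xi'\mapsto\lambda'\), and your evaluation at a generator \(P\) is precisely what that homomorphism does term by term. No substantive difference.
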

\begin{proof}
    This follows immediately by mapping the relations in
    Eqs.~\eqref{eq:relation-1} and~\eqref{eq:relation-2}
    through the homomorphism \(\ZZ[\xi']\to\End(\G) \cong \ZZ/N\ZZ\)
    sending \(\xi'\) to \(\lambda' \pmod{N}\)
    (and \(\xi\) to \(\lambda \pmod{N}\)).
\end{proof}

\section{
    General two-dimensional decompositions for elliptic curves
}

Let \(\EC/\FF_{q}\) be an ordinary elliptic curve.
If \(\pi\) is the \(q\)-power Frobenius endomorphism on \(\EC\)
then
\[
    \cpol{\pi}(T) = T^2 - \tE T + q
    \ ,
\]
where
\[
    |\tE| \le 2\sqrt{q}
    \qquad
    \text{and}
    \qquad
    \Delta_\pi :=
    \tE^2 - 4q < 0
    \ .
\]

\begin{theorem}
    \label{th:EC-2d}
    Let \(\phi\) be a non-integer endomorphism of \(\EC\)
    such that \(\ZZ[\pi] \subset \ZZ[\phi]\),
    so \(\pi = c\phi + b\) for some integers \(c\) and \(b\).
    Suppose that we are in the situation of \S\ref{sec:general}
    with \(\A = \EC\) and \((\phi_1,\phi_2) = (1,\phi)\).
    The vectors
    \[
        \vv{b}_1 
        =
        \left(
            b - 1 , c
        \right)
        \qquad
        \text{and}
        \qquad
        \vv{b}_2 
        =
        \left(
            c\deg(\phi) + (b-1)\Tr{\phi}, 1 - b
        \right)
    \]
    generate a sublattice 
    of \(\Lattice\)
    of determinant \(\#\EC(\FF_{q})\).
    If \(\G = \EC(\FF_{q})\),
    then \(\Lattice = \subgrp{\vv{b}_1,\vv{b}_2}\).
\end{theorem}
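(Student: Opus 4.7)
The plan is to read both basis vectors off Lemma~\ref{lemma:relations} and then match the determinant of the sublattice they generate against $\#\EC(\FF_{q})$.

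First I apply Lemma~\ref{lemma:relations} with $\xi = \pi$ and $\xi' = \phi$. Since $\G \subseteq \EC(\FF_{q})$, Frobenius fixes $\G$ pointwise, so its eigenvalue on $\G$ is $\lambda_\pi = 1$. The two congruences of the lemma become
\begin{align*}
    1 - c\lambda_\phi - b & \equiv 0 \pmod{N}, \\
    \lambda_\phi - \Tr{\phi} - b\lambda_\phi + c\deg(\phi) + b\Tr{\phi} & \equiv 0 \pmod{N},
\end{align*}
which, after negating the first and rearranging the second, say precisely that $\vv{b}_1 = (b-1,c)$ and $\vv{b}_2 = (c\deg(\phi) + (b-1)\Tr{\phi},\, 1-b)$ lie in $\Lattice$.

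Next I compute $\det(\vv{b}_1, \vv{b}_2)$. From $\pi = c\phi + b$ one gets $\N{\pi} = c^2\deg(\phi) + bc\Tr{\phi} + b^2 = q$ and $\Tr{\pi} = c\Tr{\phi} + 2b = \tE$, and hence
\[
    \#\EC(\FF_{q}) = q - \tE + 1 = c^2\deg(\phi) + (b-1)c\Tr{\phi} + (b-1)^2.
\]
A direct $2 \times 2$ expansion then yields $\det(\vv{b}_1,\vv{b}_2) = -\#\EC(\FF_{q})$, so the sublattice $\subgrp{\vv{b}_1,\vv{b}_2}$ has index $\#\EC(\FF_{q})$ in $\ZZ^2$.

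For the second claim, suppose $N = \#\EC(\FF_{q})$. Because $\phi_1 = 1$, the map $D$ sends $(1,0)\mapsto 1$ and is therefore surjective, so $\Lattice = \ker D$ has index exactly $N$ in $\ZZ^2$; since $\subgrp{\vv{b}_1,\vv{b}_2} \subseteq \Lattice$ and both have the same index, they coincide. There is no real obstacle here: once the two eigenvalue congruences are extracted from Lemma~\ref{lemma:relations}, the whole argument collapses to recognising $q - \tE + 1$ inside a $2\times 2$ determinant, which is just the norm-trace expansion of $\pi = c\phi + b$.
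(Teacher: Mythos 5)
Your proof is correct and follows essentially the same route as the paper: both extract the two eigenvalue congruences from Lemma~\ref{lemma:relations} with $(\xi,\xi')=(\pi,\phi)$ and $\lambda_\pi=1$, identify $\vv{b}_1$ and $\vv{b}_2$ from them, and then recognise $q-\tE+1$ inside the $2\times 2$ determinant via the norm and trace of $\pi = c\phi + b$. Your treatment is if anything slightly more explicit than the paper's, both in the sign bookkeeping for the determinant and in spelling out the surjectivity-of-$D$ / index argument for the final equality $\Lattice = \subgrp{\vv{b}_1,\vv{b}_2}$.
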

\begin{proof}
    The Frobenius endomorphism \(\pi\)
    fixes the points in \(\EC(\FF_{q})\),
    so \(\pi(\G) = \G\) and 
    the eigenvalue of \(\pi\) on \(\G\) is \(\lambda_{\pi} = 1\).
    Applying Lemma~\ref{lemma:relations}
    with \((\xi,\xi') = (\pi,\phi)\) and \((\lambda,\lambda') =
    (1,\lambda_{\phi})\),
    we obtain relations
    \begin{align*}
        (b-1)\cdot 1 + c\cdot\lambda 
        & \equiv 0 \pmod{N} \qquad \text{and}
        \\
        \left((b - 1)\Tr{\phi} + c\N{\phi}\right)\cdot 1
        + 
        (1 - b)\cdot\lambda 
        & \equiv 0 \pmod{N}
        \ .
    \end{align*}
    The first implies that \(\vv{b}_1\) is in \(\Lattice\),
    the second that \(\vv{b}_2\) is in \(\Lattice\).
    Equations~\eqref{eq:b-and-c} imply that
    \[
        \det(\subgrp{\vv{b}_1,\vv{b}_2}) 
        = 
        (b-1)^2 + c(c\deg(\phi) + (b-1)\Tr{\phi}) 
        = 
        q - \tE + 1 
        \ ,
    \]
    which is \(\#\EC(\FF{q})\).
\end{proof}

Theorem~\ref{th:EC-2d} constructs two basis vectors, 
but it makes no claim about their length.
We can give some almost-trivial bounds on the size of
\(b\) and \(c\),
using
\(4q - \tE^2 = -c^2\Disc{\phi} \),
\(b = \tfrac{1}{2}\left(\tE - c\Tr{\phi}\right)\),
and the triangle inequality:
\begin{equation}
    \label{eq:trivial-bounds}
    |c| = 2\sqrt{q}\sqrt{((\tE/2\sqrt{q})^2-1)/\Disc{\phi}}
    \qquad
    \text{and}
    \qquad
    |b| \le \frac{1}{2}|\tE| + \frac{1}{2}|\Tr{\phi}||c|
    \ .
\end{equation}

But proving general bounds that apply for arbitrary endomorphisms
is probably the wrong approach
when all of the endomorphisms used in practical scalar decompositions are 
special (and deliberately non-general) constructions.
Consider the Ciet--Sica--Quisquater bounds
of~\cite[Theorem~1]{Ciet--Sica--Quisquater} for the case \(r = 2\)
of \S1:
for every integer \(m\),
there exists a decomposition \((a_1,a_2)\) of \(m\)
such that
\(\|(a_1,a_2)\|_\infty \le C \sqrt{N}\),
with \(C = (1 + |\Tr{\phi}| + \N{\phi})^{1/2}\).
In terms of bitlength,
\begin{equation}
    \label{eq:Ciet--Sica--Quisquater-bound}
    \log_2\|(a,b)\|_\infty
    \le 
    \tfrac{1}{2}\log_2N 
    + 
    \tfrac{1}{2}\log_2\left(1 + |\Tr{\phi}| + \N{\phi}\right) 
    \ .
\end{equation}
In this theorem, \(t_\phi\) and \(n_\phi\) (and \(C\))
are implicitly treated as constants---that is, 
independent of \(N\) and \(q\).
While this is appropriate for GLV curves, 
\(t_\phi\) and \(n_\phi\) are not ``constant'' when 
\(\phi\) is inseparable (notably, \(n_\phi\) is divisible by \(p\)).
In this case, the bound above is spectacularly loose:
Remark~\ref{rem:GLS-bounds} below gives a detailed example of this
in the context of GLS endomorphisms.

\section{Shrinking the basis (or expanding the sublattice) to fit~\(\G\)}
\label{sec:shrink}

Let \(h\) be the cofactor such that \(\#\EC(\FF_{q}) = hN\).
By the Pohlig--Hellman--Silver reduction~\cite{Pohlig--Hellman},
the cryptographic strength of \(\EC\) 
depends entirely on the size of the prime \(N\),
so we should choose \(\EC\) with \(N\) as large as possible.
While \(h = 1\) is ideal, 
allowing \(h = 2\) or \(4\)
permits faster curve arithmetic 
via transformations to Montgomery~\cite{Montgomery} 
or twisted Edwards~\cite{Hisil--Wong--Carter--Dawson}
models, for example.
(In the pairing-based context \(h\) may be somewhat larger.)

The lattice \(\Lattice\)
has determinant \(N\),
but the vectors \(\vv{b}_1,\vv{b}_2\)
constructed by Theorem~\ref{th:EC-2d}
are a basis for a (sub)lattice of determinant \(hN\).
If \(h \not= 1\),
then while our sublattice will still give short decompositions,
it is suboptimal.
If \(h = 4\), for example,
then our basis may be one bit too long---which 
generally means one double too many when the resulting decompositions
are used in a multiexponentiation algorithm.

If for some reason \(\G \not= \EC(\FF_{q})\),
then we want to be able to derive a basis of the full lattice \(\Lattice\)
from \(\vv{b}_1,\vv{b}_2\).
First, note that 
\[
    \subgrp{
        \tfrac{1}{g}\vv{b}_1 \ , 
        \tfrac{1}{g}\vv{b}_2
    }
    \subseteq
    \Lattice
    \qquad
    \text{where}
    \qquad
    g = \gcd(c,b-1)
    \ ,
\]
because the relations 
\((b-1) + c\lambda \equiv 0 \pmod{N}\)
and
\(c\deg(\phi) + (b-1)\Tr{\phi} + (b-1)\lambda \equiv 0 \pmod{N}\)
still hold when we divide through by \(g\).
This new sublattice has index \(\#\EC(\FF_{q})/(g^2N)\) in \(\Lattice\).
Note that if \(g \not= 1\),
then \(\EC[g] \subset \EC(\FF_{q})\),
because 
\(\pi - 1 = g((c/g)\phi + (b-1)/g)\),
so \(\pi - 1\) factors through \([g]\).

More generally,
suppose \(\ell\) is a prime dividing \(h\):
then there exists a sublattice 
\(\Lattice'\) such that
\[
    \subgrp{\vv{b}_1,\vv{b}_2}
    \subset 
    \Lattice'
    \subseteq 
    \Lattice
    \qquad
    \text{with}
    \qquad
    [\Lattice':\subgrp{\vv{b}_1,\vv{b}_2}] = \ell
    \ .
\]
Looking at the components of \(\vv{b}_1\) and \(\vv{b}_2\),
we see that if \(\ell\) divides \(c\), then it must also divide \(b-1\)
(otherwise \(\Lattice'\) cannot exist),\footnote{
    It is possible to take a much more highbrow view of all this:
    Theorem~1 of~\cite{Lenstra} implies that if \(\EC[\ell] \subseteq
    \EC(\FF_{q})\) then \(\ell\) must divide the conductor of \(\ZZ[\pi]\)
    in the endomorphism ring of \(\EC\) 
    (see also~\cite[p.~40]{Kohel}).
    In this case \(\ell\) divides both \(\pi - 1\) and \(c\phi\),
    so it must also divide \(b-1\).
    Removing the factor of \(\ell^2\) from the index of our sublattice
    therefore corresponds to removing the contribution of the
    full \(\ell\)-torsion to our endomorphism relations.
}
and we can replace each \(\vv{b}_i\) with \(\frac{1}{\ell}\vv{b}_i\) as
above to produce a sublattice of index \(h/\ell^2\) in \(\Lattice\).

Suppose now that \(\ell\) does not divide \(c\).
If \(\ell\) divides \(b-1\) and \(\deg\phi\),
then \(\frac{1}{\ell}\vv{b}_2\) is in \(\Lattice\),
so we can take \(\Lattice' = \subgrp{\vv{b}_1,\frac{1}{\ell}\vv{b}_2}\).
Otherwise, \(\frac{1}{\ell}(\vv{b}_1 + i\vv{b}_j)\) is in
\(\Lattice\) for precisely one \(0 < i < \ell\):
that is, \(i = -c(b-1)^{-1} \bmod{\ell}\).
We can therefore take
\(\Lattice' = \subgrp{\vv{b}_1,\frac{1}{\ell}(\vv{b}_1 + i\vv{b}_2)}\).

Iterating this process factor by factor of \(h\), 
we can gradually shrink the vectors produced by
Theorem~\ref{th:EC-2d} to derive a true basis of \(\Lattice\).  But
as we remarked above, in conventional discrete-log based cryptography the
most important cases are \(h = 1, 2\), and \(4\), and then there is almost
nothing to be done.  
To handle the cases where \(\EC(\FF_{q}) \cong \G\oplus\ZZ/2\ZZ\) or
\(\G\oplus(\ZZ/2\ZZ)^2\), for example, the following
simple procedure produces a basis for~\(\Lattice\):
\begin{itemize}
    \item   If \(c\) is even, then 
        \(\Lattice = \subgrp{\frac{1}{2}\vv{b}_1,\frac{1}{2}\vv{b}_2}\).
    \item   If \(c\) and \(b\) are odd and \(\deg\phi\) is even, then
        \(\Lattice = \subgrp{\vv{b}_1,\frac{1}{2}\vv{b}_2}\).
    \item   Otherwise,
        \(\Lattice = \subgrp{\vv{b}_1,\frac{1}{2}(\vv{b}_1 +
        \vv{b}_2)}\).
\end{itemize}

\section{Decompositions for GLV endomorphisms}

Let \(\widetilde{\EC}\) be an elliptic curve over \(\QQbar\)
with complex multiplication by \(\ZZ[\sqrt{\Delta}]\);
that is, with an explicit endomorphism \(\widetilde{\phi}\)
such that \(\ZZ[\widetilde{\phi}] \cong \ZZ[\sqrt{\Delta}]\).
Let \(\EC/\FF_{q}\) be the (good) reduction modulo \(p\) of \(\EC\),
(and suppose that \(\EC\) is not supersingular);
by definition, \(\EC\) comes equipped with 
an explicit separable endomorphism \(\phi\)
such that \(\ZZ[\phi] \cong \ZZ[\sqrt{\Delta}]\).
If \(\Delta\) and \(\widetilde{\EC}\) were
was chosen in such a way that \(\phi\) has very low degree,
then \(\phi\) can be efficiently computable,
and hence useful for scalar decompositions.

Suppose, therefore,
that \(\phi\) has very low degree for efficiency reasons.
Then \(\N{\phi} = \deg(\phi)\) 
must be very small;
and since \(\Disc{\phi}\) must be negative,
\(\Tr{\phi}^2\) (and hence \(|\Disc{\phi}|\))
must also be very small.
In particular, \(|\Tr{\phi}| < 2\sqrt{\deg(\phi)}\)
and \(|\Disc{\phi}| < 4\deg(\phi)\).

In practice, \(\ZZ[\phi]\) is either the maximal order in
\(\QQ(\pi)\), or (exceptionally) an order of index two in the maximal order.
It is therefore reasonable to 
\begin{center}
    assume \(\ZZ[\pi]\) is contained in \(\ZZ[\phi]\),
\end{center}
so
\[
    \pi = c\phi + b
    \qquad
    \text{with}
    \qquad
    b = \tfrac{1}{2}\left(\tE - c\Tr{\phi}\right)
    \qquad
    \text{and}
    \qquad
    c^2\Disc{\phi} = \Tr{\pi}^2 - 4q
    \ .
\]
This allows us to write down a basis for (a sublattice of) the GLV lattice
using Theorem~\ref{th:EC-2d}.

It is important to note that in practice, 
\(b\) and \(c\)
are already known from the determination of the curve order,
precisely because \(\pi = c\phi + b\).
Indeed,
if we want to compute the order of an elliptic curve known to have
complex multiplication by a CM order (such as \(\ZZ[\phi]\), in this
case),
then we would typically use
the algorithm described in~\cite[\S4]{Schoof},
which
computes the trace \(\tE\) of Frobenius
\emph{by computing \(b\) and \(c\)}.
This approach uses Cornacchia's algorithm
to compute a generator of a principal ideal in \(\ZZ[\phi]\) of norm \(q\),
before taking its trace to compute \(\#\EC(\FF_{q})\);
but this generator is none other than \(\pi = c\phi + b\).

Alternatively,
\(\EC\) could be constructed using the CM method
starting from the small discriminant \(\Disc{\phi}\).
In this case, \(b\) and \(c\) are explicitly constructed so that 
\(c\phi + b\) will have norm \(q\).

In any case, 
if we had somehow mislaid the values of \(b\) and \(c\),
then we could recover them by factoring the ideal \((q)\) in
\(\ZZ[\phi]\) using (for example) Cornacchia's algorithm,
which amounts to repeating the point counting algorithm described above.
The element \(c\phi + b\) will be (up to sign) one of the two resulting 
generators of the factors of \((q)\).
Alternatively, 
we could use \(c \equiv (2-\Tr{\pi})/(2\lambda_\phi - \Tr{\phi}) \pmod{N}\);
though inverting \(2\lambda_\phi - \Tr{\phi}\) modulo \(N\) is roughly
equivalent to the use of the Euclidean algorithm in
the original GLV method.

One important feature of the GLV setting is that \(\Tr{\phi}\),
\(\N{\phi}\), and
\(\Disc{\phi}\) are independent of \(q\) and \(\tE\),
so the bitlength of the basis produced by Theorem~\ref{th:EC-2d}
exceeds \(\frac{1}{2}\log_2q\) by no more than an explicit constant.
The following examples
consider the new basis for the GLV curves with endomorphisms
of degree at most \(3\) 
(treated in detail elsewhere 
by Gallant, Lambert, and Vanstone~\cite{Gallant--Lambert--Vanstone}
and Longa and Sica~\cite{Longa--Sica}).

\begin{example}[\(j\)-invariant 1728:
    \protect{cf.~\cite[Ex.~3]{Gallant--Lambert--Vanstone} and~\cite[Ex.~1]{Longa--Sica}}]
    If \(q \equiv 3 \pmod{4}\),
    then for every \(a \not=0\) in \(\FF_{q}\)
    the curve \(\EC_{1728}: y^2 = x^3 + ax\)
    has an \(\FF_{q}\)-endomorphism \(\phi: (x,y) \mapsto (-x,-iy)\)
    (where \(i^2 = -1\)),
    with \(\cpol{\phi}(T) = T^2 + 1\);
    so \(\ZZ[\phi] \cong \ZZ[\sqrt{-1}]\).
    Theorem~\ref{th:EC-2d}
    constructs the basis
    \[
        \vv{b}_1 = \left(\tfrac{1}{2}\tE-1,c\right)
        \qquad
        \text{and}
        \qquad
        \vv{b}_2 = \left(c,1-\tfrac{1}{2}\tE\right)
        \ ,
    \]
    where
    \(c^2 = q-(\tE/2)^2\).
    This basis is not only short 
    (clearly \(\|\vv{b}_1\|_\infty = \|\vv{b}_2\|_\infty \le
    \tfrac{1}{2}\log_2q\)) and reduced, 
    it is also orthogonal.
\end{example}

\begin{example}[\(j\)-invariant 0:
    \protect{cf.\cite[Ex.~4]{Gallant--Lambert--Vanstone} and~\cite[Ex.~2]{Longa--Sica}}]
    If \(q \equiv 2 \pmod{3}\),
    then for any \(a \not=0\) in \(\FF_{q}\),
    the curve \(\EC_{1728}: y^2 = x^3 + a\)
    has an \(\FF_{q}\)-endomorphism \(\phi: (x,y) \mapsto (\zeta_3x,y)\)
    (where \(\zeta_3\) is a primitive third root of unity),
    with \(\cpol{\phi}(T) = T^2 + T + 1\):
    that is, \(\EC_{0}\)
    has explicit CM by \(\ZZ[(1 + \sqrt{-3})/2]\).
    Looking at the basis produced by Theorem~\ref{th:EC-2d},
    we find
    \[
        \vv{b}_1 = \left( \tfrac{1}{2}(\tE - c) - 1 , c \right)
        \quad
        \text{and}
        \quad
        \vv{b}_2 
        = 
        \left( \tfrac{1}{2}(\tE + c) - 1, 1-\tfrac{1}{2}(\tE - c)  \right)
        \ ,
    \]
    where \(c^2 = \tfrac{1}{3}\left(4q - \tE^2\right)\).
    We note that in this case,
    two applications of the triangle inequality yields
    \(\log_2\|\vv{b}_i\|_\infty < \frac{1}{2}\log_2 q + 1\).
\end{example}

\begin{example}[\(j\)-invariant \(-3375\):
    \protect{cf.~\cite[Ex.~5]{Gallant--Lambert--Vanstone} and~\cite[Ex.~3]{Longa--Sica}}]
    Suppose \(-7\) is a square in \(\FF_{q}\).
    The curve 
    \(\EC_{-3375}: y^2 = x^3 - \tfrac{3}{4}x^2 - 2x  - 1\) 
    over \(\FF_{q}\)
    has a degree-2 endomorphism \(\phi\)
    with \(\cpol{\phi}(T) = T^2 - T + 2\)
    (and \(\ker\phi = \subgrp{(2,0)}\));
    that is, \(\EC_{-3375}\) 
    has explicit CM by \(\ZZ[(-1+\sqrt{-7})/2]\).
    Theorem~\ref{th:EC-2d} yields vectors
    \[
        \vv{b}_1 = \left( b - 1, c \right)
        \qquad
        \text{and}
        \qquad
        \vv{b}_2 = \left(2c - (b-1), (1-b) \right)
        \ ;
    \]
    as before, \(\log_2\|\vv{b}_i\|_\infty < \tfrac{1}{2}\log_2q + 1\).
\end{example}

\begin{example}[\(j\)-invariant \(8000\): \protect{cf.~\cite[Ex.~6]{Gallant--Lambert--Vanstone} and~\cite[Ex.~4]{Longa--Sica}}]
    Suppose \(-2\) is a square in \(\FF_{q}\).
    The curve \(\EC_{8000}: y^2 = 4x^3 - 30x - 28\)
    over \(\FF_{q}\)
    has a degree-2 endomorphism \(\phi\) with 
    \(\cpol{\phi}(T) = T^2 + 2\)
    (and \(\ker\phi = \subgrp{(-2,0)}\)):
    that is, 
    \(\EC_{8000}\) has explicit CM by \(\ZZ[\sqrt{-2}]\).
    Theorem~\ref{th:EC-2d}
    yields vectors
    \[
        \vv{b}_1 = \left( b - 1, c \right)
        \qquad\text{and}\qquad
        \vv{b}_2 = \left( 2c, 1 - b \right)
    \]
    with \(\log_2\|\vv{b}_i\| \le \frac{1}{2}\log_2q + 1\).
\end{example}

\begin{example}[\(j\)-invariant 32768: \protect{cf.~\cite[Ex.~5]{Longa--Sica}}]
    Suppose \(-11\) is a square in \(\FF_{q}\).
    The curve
    \(\EC_{32768}: y^2 = x^3 - \tfrac{13824}{539}x +
    \tfrac{27648}{539}\)
    over \(\FF_{q}\)
    has a degree 3 endomorphism \(\phi\) 
    with \(\cpol{\phi}(T) = T^2 - T + 3\):
    that is, 
    \(\EC_{32768}\) has explicit CM by \(\ZZ[\frac{1}{2}(1 + \sqrt{-11})]\).
    The kernel of \(\phi\) 
    is cut out by \((x - \frac{24}{7}(1 - 1/\sqrt{-11}))\).
    Theorem~\ref{th:EC-2d}
    constructs a pair of vectors
    \[
        \vv{b}_1 = \left(b-1,c\right)
        \qquad\text{and}\qquad
        \vv{b}_2 = \left(3c-(b-1),1-b\right)
    \]
    with \(\log_2\|\vv{b}_i\|_\infty < \frac{1}{2}\log_2q + 2\).
\end{example}

\begin{example}[\(j\)-invariant 54000: \protect{cf.~\cite[Example 6]{Longa--Sica}}]
    Suppose \(-3\) is a square in \(\FF_{q}\).
    The curve 
    \(\EC_{54000}: y^2 = x^3 - \tfrac{3375}{121}x + \tfrac{6750}{121}\)
    over \(\FF_{q}\)
    has an \(\FF_{q}\)-endomorphism 
    \(\phi\) of degree \(3\) 
    with minimal polynomial \(\cpol{\phi}(T) = T^2 + 3\)
    (and kernel cut out by \((x-45/11)\)):
    that is, \(\EC_{54000}\) has explicit CM by \(\ZZ[\sqrt{-3}]\).
    Theorem~\ref{th:EC-2d}
    yields vectors
    \[
        \vv{b}_1 = \left(b-1,c\right)
        \qquad\text{and}\qquad
        \vv{b}_2 = \left(3c,1-b\right)
    \]
    with \(\log_2\|\vv{b}_i\|_\infty < \tfrac{1}{2}\log_2q + 1\).
\end{example}

\section{Decompositions for the GLS endomorphism}

\label{sec:GLS}

Let \(\EC_0\) be an ordinary elliptic curve over \(\FF_p\),
and
let \(\EC := \EC_0\times\FF_{p^2}\) be the base extension of \(\EC_0\)
to \(\FF_{p^2}\).  
The Frobenius endomorphism \(\pi_0\) of \(\EC_0\)
has characteristic polynomial \(\cpol{\pi}(T) = T^2 - t_0T + p\)
with \(|t_0| < 2\sqrt{p}\),
while
the (\(p^2\)-power) Frobenius endomorphism \(\pi\) of \(\EC\) 
satisfies \(\pi = \pi_0^2\), 
so \(\cpol{\pi}(T) = T^2 - (t_0^2 - 2p)T + p^2\).
In particular, 
\[
    \#\EC(\FF_{p^2}) 
    = 
    \cpol{\pi}(1) 
    = 
    (p + 1)^2 - t_0^2 = \#\EC_0(\FF_p)\cdot(p + 1 + t_0)
    ,
\]
so \(\#\EC(\FF_{p^2})\) cannot have prime divisors larger than
\(\Oh(p)\).

Now let \(\EC'\) be the quadratic twist of \(\EC\),
with \(\tau: \EC \to \EC'\) the twisting isomorphism.
The Frobenius \(\pi'\) on \(\EC'\) satisfies
\(\pi' = \tau\pi\tau^{-1}\), so
\[
    \cpol{\pi'}(T) 
    = 
    T^2 - (2p-t_0^2)T + p^2  
    \qquad
    \text{and}
    \qquad
    \Delta_{\pi'} = t_0^2(t_0^2 - 4p)
    \ .
\]
Note that 
\( \#\EC'(\FF_{q}) = \cpol{\pi'}(1) = (p-1)^2 + t_0^2 \);
unlike \(\#\EC(\FF_{q})\),
this can take prime (and near-prime) values,
so \(\EC'\) may be useful for discrete-logarithm-based cryptosystems.

The GLS endomorphism on \(\EC'\) is \(\psi := \tau\pi_0\tau^{-1}\).
It is defined over \(\FF_{p^2}\);
its minimal polynomial is
\[
    \cpol{\psi}(T) = \cpol{\pi_0}(T) = T^2 - t_0T + p 
    \ ,
    \qquad
    \text{and}
    \qquad
    \Delta_\psi = t_0^2 - 4p
    \ .
\]
If \(\G\) is a cyclic subgroup of \(\EC'(\FF_{p^2})\)
of order \(N\) 
such that \(\psi(\G) \subseteq \G\),
then the eigenvalue \(\lambda_\psi\) of \(\psi\) on \(\G\) 
is a square root of \(-1\) modulo \(N\).

We have \(\psi^2 = \tau\pi_0^2\tau^{-1} = \tau\pi\tau^{-1} = -\pi'\),
so \(\ZZ[\psi]\) contains \(\ZZ[\pi']\).
We can therefore apply Theorem~\ref{th:EC-2d}
to the inclusion \(\ZZ[\pi'] \subseteq \ZZ[\psi]\)
in order to compute a short basis for (a sublattice of)
\(\Lattice = \subgrp{(N,0),(-\lambda_\psi,1)}\).

Looking at the discriminants, 
we see that \(\ZZ[\pi']\) has conductor \(|t_0|\) in \(\ZZ[\psi]\).
Indeed, 
\[
    \pi' = -t_0\psi + p 
    \ ;
\]
so Theorem~\ref{th:EC-2d} yields a basis
\[
    \vv{b}_1 = (p-1,-t_0)
    \qquad
    \text{and}
    \qquad
    \vv{b}_2 = (-t_0,1-p)
    \ .
\]
This is precisely (up to sign) the basis
of~\cite[Lemma~3]{Galbraith--Lin--Scott};
it is not only short (the largest coefficient is \(p-1\),
so \(\log_2\|\vv{b}_i\|_\infty < \log_2p\)), 
it is also orthogonal.
If \(\EC'(\FF_{p^2})\) does not have prime order,
then we can easily shrink the basis to fit \(\G\)
by following the procedure described in \S\ref{sec:shrink}.

\begin{remark}
    \label{rem:GLS-bounds}
    From a purely formal point of view,
    we could have treated this identically to the GLV
    case, with \(\psi\) in place of \(\phi\),
    but there are a number of important differences.
    First of all, the ring \(\ZZ[\psi]\) has a much larger discriminant
    than any GLV order:
    in general \(\ZZ[\psi]\) is far from being the maximal order of the
    endomorphism algebra.
    Second, the parameters \(\Tr{\psi}\), \(\N{\psi}\), 
    and \(\Disc{\psi}\)
    vary with \(p\) and \(t\),
    so we cannot treat the excess bitlength in
    the Ciet--Sica--Quisquater bounds
    (Ineq.~\eqref{eq:Ciet--Sica--Quisquater-bound})
    as a constant.  
    Indeed, if we simply plug the values
    \begin{align*}
        q & = p^2
        \ , &
        \tE & = 2p - t_0^2
        \ , &
        \Disc{\pi} & = t_0^2(t_0^2 - 4p)
        \ ,
        \\
        \N{\psi} & = p
        \ , &
        \Tr{\psi} & = t_0
        \ , &
        \Disc{\psi} & = t_0^2 - 4p
    \end{align*}
    into Inequalities~\eqref{eq:trivial-bounds}
    or~\eqref{eq:Ciet--Sica--Quisquater-bound},
    or even the ``optimal'' bound of~\cite[Theorem~4]{Ciet--Sica--Quisquater},
    then we obtain a rather pessimistic bitlength bound of around
    \(\frac{3}{4}\log_2 q\), which exceeds the true length of the basis
    by \(\frac{1}{4}\log_2q\) bits.
\end{remark}

\section{Decompositions for reductions of \(\QQ\)-curves}

GLS curves may be seen as a special case of a more general construction
involving reductions of quadratic \(\QQ\)-curves.
We give a very brief description of this construction here
(see~\cite{Smith-QQ} for more details, and families of examples).
In~\cite[Proposition~2]{Smith-QQ}, 
a basis is constructed in a seemingly ad-hoc way that yields half-length
scalar decompositions; we will see below that this basis also
results from Theorem~\ref{th:EC-2d}.

Let \(K\) be a quadratic field, \(\sigma\) the nontrivial automorphism of \(K\)
fixing \(\QQ\).
Let
\(\ECK : y^2 = x^3 + ax + b\)
and \(\conj[\sigma]{\ECK} : y^2 = x^3 + \sigma(a)x + \sigma(b)\)
be a pair of Galois-conjugate curves over \(K\)
such that there exists an isogeny
\(\widetilde{\phi}: \EC \to {}^{(p)}\EC\)
of small degree \(d\)
defined over \(K(\sqrt{-d})\).
If \(p\) is an inert prime in \(K\)
that is a prime of good reduction for \(\ECK\) (and not dividing \(d\)),
then we can reduce \(\widetilde{\phi}: \ECK \to \conj[\sigma]{\ECK}\)
modulo \(p\)
to obtain a \(d\)-isogeny \(\phi: \EC \to \conj[(p)]{\EC}\)
of curves over \(\FF_{p^2}\).
Here \(\conj[(p)]{\EC}\), the reduction of \(\conj[\sigma]{\EC}\) modulo
\(p\), is the curve formed from \(\EC\) by applying
\(p\)-th powering to its coefficients;
so there also exists a \(p\)-th power Frobenius isogeny
\(\pi_0: \conj[(p)]{\EC} \to \EC\).
Composing \(\phi\) with \(\pi_0\), 
we obtain an inseparable endomorphism
\(\psi := \pi_0\phi\) of \(\EC\), of degree \(dp\).
If \(d\) is very small, then \(\psi\) can be efficiently computable,
since \(p\)-th powering in \(\FF_{p^2}\) is essentially free.
(The GLS construction is equivalent to the special case where \(\phi\)
is an isomorphism---that is, \(d = 1\).)

Let \(\epsilon_p := -\Legendre{-d}{p}\);
that is, \(\epsilon_p = 1\) if \(-d\) is a nonsquare modulo \(p\),
and \(-1\) if it is a square.
Then according to~\cite[Proposition~1]{Smith-QQ},
the minimal polynomial of \(\psi\) is 
\[
    \cpol{\psi}(T) = T^2 - \epsilon_prdT + dp
    \ ,
\]
and 
\[
    dr^2 = 2p + \epsilon_p\tE
    \ .
\]
(This determines \(r\) up to sign; exchanging \(r\) with \(-r\)
corresponds to exchanging \(\phi\) with \(-\phi\).)
Squaring the endomorphism,
we obtain 
\(\psi^2 = \pi_0\phi\pi_0\phi = \epsilon_p\pi_0\phi\dualof{\phi}\pi_0 = \epsilon_p[d]\pi\),
where \(\pi\) is the usual \(p^2\)-power Frobenius on \(\EC\).
In particular,
\(\ZZ[\pi]\) is contained in \(\ZZ[\psi]\);
we find that
\[
    \pi = r\psi - \epsilon_p p 
    \ .
\]
If \(\G \subseteq \EC(\FF_{q})\) is a cyclic subgroup of order \(N\)
such that \(\psi(\G) = \G\), then the eigenvalue of \(\psi\) on \(\G\) is
a square root of \(\epsilon_p d\) modulo \(N\).

Applying Theorem~\ref{th:EC-2d} to the orders
\(\ZZ[\pi]\subseteq\ZZ[\psi]\),
we obtain the basis
\[
    \vv{b}_1 
        =
        \left(
            -(1 + \epsilon_p p), r
        \right)
    \qquad
    \text{and}
    \qquad
    \vv{b}_2 
        =
        \left(
            -\epsilon_p rd, 1 + \epsilon_p p
        \right)
    \ .
\]
Up to sign, this is the basis appearing in the proof
of~\cite[Proposition~2]{Smith-QQ},
where it is used to produce scalar decompositions having bitlength at
most \(\lceil{\log_2p}\rceil\).
While this basis generates a sublattice of determinant
\(\#\EC(\FF_{p^2})\),
if \(N \not= \#\EC(\FF_{p^2})\)
then the basis may be easily shrunk to fit \(\G\) following the procedure
outlined in \S\ref{sec:shrink}.

\section{
    Four-dimensional decompositions for GLV+GLS
}
\label{sec:LS}

Recently, Longa and Sica~\cite{Longa--Sica} 
followed by Guillevic and Ionica~\cite{Guillevic--Ionica}
have proposed using a pair of efficiently computable endomorphisms 
\(\phi\) and \(\psi\) to construct four-dimensional scalar
decompositions on elliptic curves,
corresponding to the situation of \S\ref{sec:general}
with \((\phi_1,\phi_2,\phi_3,\phi_4) = (1,\phi,\psi,\phi\psi)\).
(The Longa--Sica technique supersedes
the earlier 3-dimensional construction of Zhou, Hu, Xu, and Song~\cite{Zhou--Hu--Xu--Song}
with \((\phi_1,\phi_2,\phi_3) = (1,\phi,\psi)\),
which we will not cover here.)
Longa and Sica propose an elaborate iterated Cornacchia algorithm to
derive their lattice basis.
In this section, we show that no lattice algorithms are necessary: 
we can generate four short lattice vectors using
Lemma~\ref{lemma:relations}.


Recall the Longa--Sica construction:
Let \(\EC_0/\FF_p\) be a GLV curve, with an efficiently computable
endomorphism \(\phi_0\), and Frobenius \(\pi_0\).
Applying the GLS construction (exactly as in \S\ref{sec:GLS})
to \(\EC_0\),
we obtain a twisted elliptic curve \(\EC'\) over~\(\FF_{p^2}\)
with an efficiently computable endomorphism \(\psi\)
corresponding to the twist of \(\pi_0\):
if \(\tau: \EC_0\otimes\FF_{p^2}\to\EC'\) is the twisting isomorphism,
then \(\psi = \tau\pi_0\tau^{-1}\).
The endomorphisms \(\psi\) and \(\pi'\)
satisfy
\[
    \cpol{\psi}(T) = \cpol{\pi_0}(T) = T^2 - \Tr{\pi_0}T + p 
    \quad
    \text{and}
    \quad
    \cpol{\pi'}(T) = T^2 - (2p-\Tr{\pi_0}^2)T + p^2 
    ,
\]
respectively.
Clearly, \(\ZZ[\psi]\) contains \(\ZZ[\pi']\):
the discriminants of the orders \(\ZZ[\pi']\) and \(\ZZ[\psi]\) are
\[
    \Disc{\psi} = \Tr{\pi_0}^2 - 4p
    \qquad
    \text{and}
    \qquad
    \Disc{\pi'} 
    = \Tr{\pi_0}^2(\Tr{\pi_0}^2 - 4p)
    = \Tr{\pi_0}^2\Disc{\psi}
    \ ,
\]
so the relative conductor of \(\ZZ[\pi']\) in \(\ZZ[\psi]\) 
is \(|\Tr{\pi_0}| < 2\sqrt{p}\).
As in vanilla GLS, we can write
\[
    \label{eq:pi-psi-GLV+GLS}
    \pi' = -\Tr{\pi_0}\psi + p 
    \ .
\]

The GLV endomorphism \(\phi_0\) on \(\EC_0\) induces a second
efficient endomorphism 
\(\phi := \tau\phi_0\tau^{-1}\)
on~\(\EC'\).
We have \(\cpol{\phi} = \cpol{\phi_0}\),
so \(\ZZ[\phi] \cong \ZZ[\phi_0]\).
Since \(\phi_0\) is a GLV endomorphism,
\(\ZZ[\phi]\) is either the maximal order of 
the endomorphism algebra of \(\EC'\),
or very close to it---so it makes sense 
to assume that \(\ZZ[\phi]\) contains \(\ZZ[\psi]\)
(cf.~Remark~\ref{rem:LS-hyp} below),
so that we can write \(\psi\) as 
\[
    \label{eq:psi-phi-GLV+GLS}
    \psi = c\phi + b 
    \ ,
\]
where
\begin{equation}
    \label{eq:b-c-GLV+GLS}
    b = \frac{1}{2}(\Tr{\pi_0} - c\Tr{\phi})
    \qquad
    \text{and}
    \qquad 
    c^2 
    = 
    \frac{\Delta_\psi}{\Delta_\phi}
    =
    \frac{\Tr{\pi_0}^2-4p}{\Tr{\phi}^2-4\N{\phi}}
    \ .
\end{equation}

Observe that \(b\) and \(c\) are both in \(O(\sqrt{p})\).
As with conventional GLV curves, \(c\) and \(b\) 
are both already known as
byproducts of the curve construction (via the CM method) 
or its order computation:
if \(\pi_0 = c_0\phi_0 + b_0\),
then
\[
    c = \frac{c_0}{t_0}
    \quad
    \text{and}
    \quad
    b = \frac{1}{t_0}\left(b_0 - p\right)
    \ .
\]

\begin{theorem}
    \label{th:GLV+GLS}
    With \(\phi\) and \(\psi\) defined as above,
    suppose we are in the situation of \S\ref{sec:general}
    with \((\phi_1,\phi_2,\phi_3,\phi_4) = (1,\phi,\psi,\phi\psi)\).
    The vectors
    \begin{align*}
        \vv{b}_1 &= (1,0,b,c)
        \ ,
        &
        \vv{b}_2 &= (0,1,-c\N{\phi},c\Tr{\phi} + b)
        \ ,
        \\
        \vv{b}_3 &= (-b, -c, 1, 0)
        \ , 
        &
        \vv{b}_4 &= (c\N{\phi}, -c\Tr{\phi} - b, 0, 1)
    \end{align*}
    generate a sublattice 
    of determinant \(\#\EC(\FF_{q})\)
    in \(\Lattice\).
    If \(\G = \EC(\FF_{q})\),
    then \(\Lattice = \subgrp{\vv{b}_1,\vv{b}_2,\vv{b}_3,\vv{b}_4}\).
\end{theorem}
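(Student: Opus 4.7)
The plan is to exhibit each \(\vv{b}_i\) as the coefficient vector of an endomorphism identity that vanishes on \(\G\) (proving membership in \(\Lattice\)), then compute \(\det\subgrp{\vv{b}_1,\vv{b}_2,\vv{b}_3,\vv{b}_4}\) directly, and finally invoke the fact that \(\Lattice\) has determinant \(N\) for the last assertion.

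First I would read off the underlying identities in the ordered basis \((1,\phi,\psi,\phi\psi)\). The vector \(\vv{b}_3\) encodes \(\psi - c\phi - b = 0\), which is just \eqref{eq:psi-phi-GLV+GLS} evaluated on \(\G\). Multiplying that relation by \(\phi\) and substituting \(\phi^2 = \Tr{\phi}\phi - \N{\phi}\) yields \(\phi\psi = (c\Tr{\phi}+b)\phi - c\N{\phi}\), whose coefficients are exactly those of \(\vv{b}_4\). For \(\vv{b}_1\), I would use the GLS identity \(\psi^2 = -\pi'\) from \S\ref{sec:GLS}: since \(\pi'\) fixes \(\G\), we have \(1 + \psi^2 \equiv 0\) on \(\G\), and expanding \(\psi^2 = (c\phi+b)\psi = c\phi\psi + b\psi\) produces \(\vv{b}_1\). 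Multiplying this last relation through by \(\phi\) (and reducing \(\phi^2\) once more) gives \(\vv{b}_2\).

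The main obstacle is the determinant calculation, but the block structure makes it clean. The basis matrix is
\[
    M = \begin{pmatrix} I_2 & A \\ -A & I_2 \end{pmatrix}
    \qquad\text{with}\qquad
    A = \begin{pmatrix} b & c \\ -c\N{\phi} & c\Tr{\phi}+b \end{pmatrix}
    \ ,
\]
and since the \(I_2\) blocks commute with \(\pm A\), we have \(\det M = \det(I_2 + A^2)\). A short computation using \eqref{eq:b-c-GLV+GLS} gives \(\mathrm{tr}(A) = 2b + c\Tr{\phi} = \Tr{\psi} = \Tr{\pi_0}\) and \(\det A = b^2 + bc\Tr{\phi} + c^2\N{\phi} = \N{\psi} = p\); put differently, \(A\) is the matrix of multiplication by \(\psi\) on \(\ZZ[\phi]\) in the basis \((1,\phi)\), so its characteristic polynomial coincides with \(\cpol{\psi}\). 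By Cayley--Hamilton, \(A^2 = \Tr{\pi_0}A - p\,I_2\), and the standard \(2\times 2\) identity \(\det(xI_2 + yA) = x^2 + xy\,\mathrm{tr}(A) + y^2\det(A)\) then gives
\[
    \det M = \det\bigl((1-p)I_2 + \Tr{\pi_0}A\bigr)
    = (1-p)^2 + (1-p)\Tr{\pi_0}^2 + p\,\Tr{\pi_0}^2
    = (p-1)^2 + \Tr{\pi_0}^2
    \ ,
\]
which matches the order formula \(\#\EC(\FF_{q}) = (p-1)^2 + \Tr{\pi_0}^2\) recalled in \S\ref{sec:GLS}.

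Finally, since \(\Lattice\) has determinant \(N = \#\G\) by definition, in the case \(\G = \EC(\FF_{q})\) the sublattice \(\subgrp{\vv{b}_1,\vv{b}_2,\vv{b}_3,\vv{b}_4}\) has the same determinant as \(\Lattice\) and must therefore coincide with it.
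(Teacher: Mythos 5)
Your proof is correct, and it differs from the paper's in two productive ways.

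For showing that each $\vv{b}_i$ lies in $\Lattice$, the paper applies Lemma~\ref{lemma:relations} to $\ZZ[\psi]\subset\ZZ[\phi]$ to get two relations (giving $\vv{b}_3$ and $\vv{b}_4 - \Tr{\phi}\vv{b}_3$), then multiplies both through by $-\lambda_\psi$ using $\lambda_\psi^2 \equiv -1$ to get two more (giving $\vv{b}_1$ and $\vv{b}_2 - \Tr{\phi}\vv{b}_1$, though the paper's proof has a couple of copy typos in the vectors for those auxiliary relations). Your route is the same underlying algebra but organized more cleanly: starting from $\psi - c\phi - b = 0$ and $1 + \psi^2 = 0$, you multiply by $\phi$ and reduce $\phi^2 = \Tr{\phi}\phi - \N{\phi}$ to land on each $\vv{b}_i$ \emph{exactly}, rather than on linear combinations. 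That tightens the exposition without changing its substance, and it makes the claim "these four specific vectors generate" immediate.

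More significantly, you actually verify the determinant claim, which the paper states in the theorem but does not check in its proof. The block decomposition $M = \left(\begin{smallmatrix} I_2 & A \\ -A & I_2 \end{smallmatrix}\right)$ with $\det M = \det(I_2 + A^2)$, combined with the observation that $A$ is the matrix of multiplication by $\psi$ on $\ZZ[\phi]$ in the basis $(1,\phi)$ (so $\mathrm{tr}(A) = \Tr{\pi_0}$ and $\det A = p$), is a genuinely nice structural argument. Cayley--Hamilton then yields $(1-p)^2 + \Tr{\pi_0}^2 = \#\EC'(\FF_{p^2})$, matching \S\ref{sec:GLS}. One minor point of pedantry: after computing the determinant of the sublattice you invoke "$\Lattice$ has determinant $N$," which deserves a sentence of justification (it holds because $D$ is surjective, being a $\ZZ$-module map to $\ZZ/N\ZZ$ with $D(1,0,\ldots,0) = 1$, so $\Lattice = \ker D$ has index $N$ in $\ZZ^r$); but this is also implicit in the paper. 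In short: your proof is correct, slightly more direct in producing the basis vectors, and more complete than the paper's in that it actually establishes the determinant.
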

\begin{proof}
    Let \(\lambda_\phi\) and \(\lambda_\psi\) be the eigenvalues of
    \(\phi\) and \(\psi\) on \(\G\), respectively.
    Applying Lemma~\ref{lemma:relations}
    to the inclusion \(\ZZ[\psi] \subset \ZZ[\phi]\),
    we obtain relations
    \begin{align*}
        \lambda_\psi - c\lambda_\phi - b 
        & \equiv 0 \pmod{N}
        \qquad
        \text{and}
        \\
        \lambda_\psi\lambda_\phi - \Tr{\phi}\lambda_\psi - b\lambda_\phi +
        c\N{\phi} + b\Tr{\phi}
        & \equiv 0 \pmod{N}
        \ ,
    \end{align*}
    corresponding to the vectors \(\vv{b}_3\)
    and 
    \((c\N{\phi}+b\Tr{\phi},-c,-\Tr{\phi},1) = \vv{b}_4 - \Tr{\phi}\vv{b}_3\).
    Multiplying the relations above through by \(-\lambda_\psi\)
    and using \(\lambda_\psi^2 = \lambda_{\psi^2} = -1 \pmod{N}\),
    we obtain new relations
    \begin{align*}
        1 + c\lambda_\phi\lambda_\psi + b\lambda_\psi 
        & \equiv 0 \pmod{N}
        \qquad \text{and}
        \\
        \lambda_\phi - \Tr{\phi} + b\lambda_\phi\lambda_\psi
            - (c\N{\phi} + b\Tr{\phi})\lambda_\psi
        & \equiv 0 \pmod{N}
        \ ,
    \end{align*}
    corresponding to the vectors
    \(\left(1, 0, b, c\right) = \vv{b}_1\)
    and
    \(
        \left(
            -\Tr{\phi}, 1, -c\N{\phi} - b\Tr{\phi}, -b
        \right)
        = \vv{b}_4 - \Tr{\phi}\vv{b}_3
    \),
    respectively.
\end{proof}


The vectors produced by Theorem~\ref{th:GLV+GLS} are short:
\(\phi\) is a GLV endomorphism,
so both \(n_\phi\) and \(t_\phi\) are in \(O(1)\).
Hence,
in view of Eq.~\eqref{eq:b-c-GLV+GLS},
\(\|\vv{b}_i\|_\infty\) is in \(O(\sqrt{p})\)
for \(1 \le i \le 4\).

\begin{remark}
    \label{rem:LS-hyp}
    The assumption that \(\ZZ[\psi]\) is contained in \(\ZZ[\phi]\)
    is incompatible
    with the hypothesis of~\cite{Longa--Sica}
    (which supposes that \(\QQ(\phi)\cap\QQ(\psi) = \QQ\));
    but even without this assumption,
    \(\QQ(\phi) = \QQ(\psi) = \QQ(\pi')\)
    when \(\EC\) is ordinary,
    so the hypothesis of~\cite{Longa--Sica} is \emph{never} satisfied for
    ordinary curves.
\end{remark}

\section{Decompositions for the Guillevic--Ionica construction}
\label{sec:GI}
The Guillevic--Ionica construction~\cite{Guillevic--Ionica}
uses a modified CM method 
to search for elliptic curves \(\EC/\FF_{p^2}\) such that 
\(\EC\) has endomorphisms \(\phi\) and \(\psi\)
such that \(\phi\) is separable of very small degree \(d_1\),
and \(\psi\) is the composition of an inseparable \(p\)-isogeny
and a separable isogeny of very small degree \(d_2\).
(In a sense,
these curves are to Longa--Sica curves what 
reductions of \(\QQ\)-curves are to GLS curves.)
Once such \(\EC\) and \(p\) have been found (given \(d_1\) and \(d_2\)),
the \(\phi\) and \(\psi\) are 
easily recovered using V\'elu's formul\ae{}~\cite{Velu}.
If this construction is used, 
then (as with the standard CM method)
the expression of \(\pi\) as an element of \(\ZZ[\phi]\) is known:
\[
    \pi = c\phi + b 
    \ .
\]


\begin{theorem}
    \label{th:G-I}
    With \(\phi\) and \(\psi\) defined as above:
    Suppose we are in the situation of \S\ref{sec:general}
    with \((\phi_1,\phi_2,\phi_3,\phi_4) = (1,\phi,\psi,\phi\psi)\),
    and \(\psi^2 = [\pm d]\pi\).
    The vectors
    \begin{align*}
        \vv{b}_1 &= (\pm d,0,-b,-c)
        \ ,
        &
        \vv{b}_2 &= (0,\pm d,c\N{\phi},-c\Tr{\phi} - b)
        \ ,
        \\
        \vv{b}_3 &= (-b, -c, 1, 0)
        \ , 
        &
        \vv{b}_4 &= (c\N{\phi}, -c\Tr{\phi} - b, 0, 1)
    \end{align*}
    generate a sublattice of determinant \(\#\EC(\FF_{p^2})\) in
    \(\Lattice\).  If \(\G = \EC(\FF_{p^2})\), 
    then \(\Lattice = \subgrp{\vv{b}_1, \vv{b}_2, \vv{b}_3, \vv{b}_4}\).
\end{theorem}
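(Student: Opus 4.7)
The plan is to mirror the proof of Theorem~\ref{th:GLV+GLS} step for step, replacing the inclusion \(\ZZ[\psi]\subseteq\ZZ[\phi]\) (with \(\psi^2=-\pi'\)) by the inclusion \(\ZZ[\pi]\subseteq\ZZ[\phi]\) (with \(\psi^2 = [\pm d]\pi\)). Since \(\pi\) is the \(p^2\)-power Frobenius on \(\EC/\FF_{p^2}\) and \(\G \subseteq \EC(\FF_{p^2})\), the eigenvalue of \(\pi\) on \(\G\) is \(\lambda_\pi = 1\), and the hypothesis \(\psi^2 = [\pm d]\pi\) then forces \(\lambda_\psi^2 \equiv \pm d \pmod{N}\).

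First I would apply Lemma~\ref{lemma:relations} with \((\xi,\xi') = (\pi,\phi)\) and \((\lambda,\lambda') = (1,\lambda_\phi)\), obtaining two modular congruences involving only \(1\) and \(\lambda_\phi\). Next I would multiply each of these by \(\lambda_\psi\); wherever \(\lambda_\psi^2\) appears after multiplication, I would substitute \(\pm d\) using the boxed eigenvalue relation. This produces four congruences among \(\{1, \lambda_\phi, \lambda_\psi, \lambda_\phi\lambda_\psi\}\) modulo \(N\), which---after suitable \(\ZZ\)-linear combinations (possibly also using \(\lambda_\phi^2 \equiv \Tr{\phi}\lambda_\phi - \N{\phi} \pmod{N}\))---correspond exactly to the four vectors \(\vv{b}_1\), \(\vv{b}_2\), \(\vv{b}_3\), \(\vv{b}_4\), proving that each lies in \(\Lattice\).

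The hard part will be the determinant computation. Arranging the vectors as rows of a \(4\times 4\) matrix reveals the block structure
\[
    M \;=\; \begin{pmatrix}\pm d\cdot I_2 & B \\ B & I_2\end{pmatrix}
    \qquad\text{with}\qquad
    B \;=\; \begin{pmatrix}-b & -c \\ c\N{\phi} & -c\Tr{\phi}-b\end{pmatrix}.
\]
The matrix \(B\) is, up to transpose, the matrix of multiplication by \(-\pi\) on \(\ZZ[\phi]\) in the basis \(\{1,\phi\}\), so its trace is \(-\Tr{\pi}\) and its determinant is \(b^2+bc\Tr{\phi}+c^2\N{\phi}=\N{\pi}=p^2\). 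Since the lower two blocks commute, the classical block-determinant formula yields \(\det M = \det(\pm d\cdot I_2 - B^2)\); Cayley--Hamilton on \(B\) then reduces this to \(\#\EC(\FF_{p^2}) = p^2 + 1 - \Tr{\pi}\).

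Finally, when \(\G = \EC(\FF_{p^2})\) the full lattice \(\Lattice\) itself has determinant \(N = \#\EC(\FF_{p^2})\); since our sublattice has the same determinant, it must coincide with \(\Lattice\).
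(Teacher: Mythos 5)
Your plan diverges from the paper's argument in a way that breaks the proof. The paper's proof of Theorem~\ref{th:G-I} simply references the proof of Theorem~\ref{th:GLV+GLS}, which applies Lemma~\ref{lemma:relations} to the inclusion \(\ZZ[\psi]\subseteq\ZZ[\phi]\) with \(\psi = c\phi + b\), and then multiplies the resulting two relations by \(-\lambda_\psi\) and invokes \(\lambda_\psi^2 \equiv \pm d \pmod N\). You instead take \((\xi,\xi') = (\pi,\phi)\) and \(\pi = c\phi + b\). This is presumably because \S\ref{sec:GI} does display the relation \(\pi = c\phi + b\), but it cannot be what the theorem's \(b,c\) denote: for \(\vv{b}_3 = (-b,-c,1,0)\) to lie in \(\Lattice\) one needs \(\lambda_\psi \equiv c\lambda_\phi + b \pmod N\), which holds precisely when \(\psi = c\phi + b\). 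Under your reading, \(c\lambda_\phi + b \equiv \lambda_\pi = 1\), so \(\vv{b}_3\in\Lattice\) would force \(\lambda_\psi\equiv 1\), which is false.

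The downstream steps then collapse. Applying Lemma~\ref{lemma:relations} to \((\pi,\phi)\) yields two congruences involving only \(1\) and \(\lambda_\phi\); multiplying these by \(\lambda_\psi\) produces congruences in \(\lambda_\psi\) and \(\lambda_\phi\lambda_\psi\) only, with no \(\lambda_\psi^2\) term to replace by \(\pm d\), so the key substitution in your outline never occurs. The four vectors you would actually obtain have support \(\{1,\lambda_\phi\}\) (two vectors) and \(\{\lambda_\psi,\lambda_\phi\lambda_\psi\}\) (two vectors), giving a block-diagonal matrix whose determinant is \((\#\EC(\FF_{p^2}))^2\), nothing like what is claimed.

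The determinant step is also wrong as written. Your block computation \(\det M = \det(\pm d\,I_2 - B^2)\) is fine, and your identification of \(B\) with (the transpose of) multiplication by \(-\pi\) is correct under your hypothesis \(\pi = c\phi + b\); but then Cayley--Hamilton gives \(\det(\pm d\,I_2 - B^2) = d^2 \mp d(\Tr{\pi}^2 - 2p^2) + p^4\), a quantity of size roughly \(p^4\), not the stated \(\#\EC(\FF_{p^2}) = p^2 + 1 - \Tr{\pi}\). (With the paper's intended \(\psi = c\phi + b\), one has \(B^2 \leftrightarrow \psi^2 = \pm d\pi\), and the same block formula gives \(\det M = d^2\,\N{1-\pi} = d^2\,\#\EC(\FF_{p^2})\); so even on the correct reading the determinant carries an extra factor of \(d^2\), which you should flag rather than assert away.)
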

\begin{proof}
    The proof is the same as for Theorem~\ref{th:GLV+GLS},
    but with~\(\lambda_\psi^2 = \pm d\).
\end{proof}

%
%
%
%
%

\section{
    Two-dimensional decompositions in genus 2
}
\label{sec:Kohel--Smith-T}

Suppose \(\A/\FF_{q}\) is an ordinary principally polarized 
abelian surface (in our applications, \(\A\) is either
the Jacobian of a genus 2 curve,
or the Weil restriction of an elliptic curve).
The Frobenius endomorphism \(\pi\) of \(\A\) generates 
a quartic CM field \(\QQ(\pi)\),
and the Rosati involution of \(\End(\A)\)
(exchanging an endomorphism with its Rosati dual)
acts as complex conjugation on \(\QQ(\pi)\).
Hence, the quadratic real subfield of \(\QQ(\pi)\)
is \(\QQ(\pi + \pidual)\),
and \(\ZZ[\pi + \pidual]\)
is a real quadratic order.
We may identify \(\pidual\) with \(q/\pi\);
the eigenvalue of \(\pi + \dualof{\pi}\) on subgroups of \(\A(\FF_{q})\)
is \(1 + q\) 
(because \(\pi\) has eigenvalue \(1\)).

The characteristic polynomial of Frobenius is
\[
    \cpol{\pi}(T) = 
    T^4 - \Tr{(\pi + \pidual)}T^3 + (2q + \N{(\pi+\pidual)})T^2 
    - \Tr{(\pi + \pidual)}T + q^2 
    \ ,
\]
so
\begin{equation}
    \label{eq:g2-RM-card}
    \#\A(\FF_q)
    = 
    \cpol{\pi}(1)
    =
    \cpol{\pi+\pidual}(q+1)
    =
    (q + 1)^2 - \Tr{(\pi + \pidual)}(q + 1) + \N{(\pi+\pidual)}
    \ .
\end{equation}
The Weil bounds yield
\begin{align*}
    |\Tr{(\pi+\pidual)}| \le 4\sqrt{q}
    \quad &\text{and}\quad
    |\N{(\pi+\pidual)}| \le 4q
    \ ,
\intertext{while R\"uck~\cite{Ruck} shows that}
    \Tr{(\pi+\pidual)}^2 - 4\N{(\pi+\pidual)} > 0
    \quad &\text{and}\quad 
    \N{(\pi+\pidual)} + 4q > 2|\Tr{(\pi+\pidual)}|\sqrt{q}
    \ .
\end{align*}

\begin{theorem}
    \label{th:RM-2d}
    Suppose \(\phi\) is a non-integer real multiplication endomorphism
    of an ordinary abelian surface \(\A\) (ie, \(\dualof{\phi} = \phi\))
    such that \(\ZZ[\pi + \pidual]\subseteq\ZZ[\phi]\),
    and assume that we are in the situation of \S\ref{sec:general}
    with \((\phi_1,\phi_2) = (1,\phi)\).
    If \(\pi + \pidual = c\phi + b\),
    then the vectors
    \[
        \vv{b}_1 
        =
        \left(
            q+1 - b, -c
        \right)
        \qquad
        \text{and}
        \qquad
        \vv{b}_2 
        =
        \left(
            c\N{\phi} - (q+1 - b)\Tr{\phi}, q+1 - b
        \right)
    \]
    generate a sublattice 
    of determinant \(\#\A(\FF_{q})\)
    in \(\Lattice\).
    If \(\G = \A(\FF_{q})\),
    then \(\Lattice = \subgrp{\vv{b}_1,\vv{b}_2}\).
\end{theorem}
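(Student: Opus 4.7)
The plan is to mimic the proof of Theorem~\ref{th:EC-2d}, with the real multiplication element \(\pi+\pidual\) playing the role that \(\pi\) played in the elliptic curve case. The key observation is that \(\pi\) fixes every point of \(\G\subseteq\A(\FF_q)\), so \(\pidual = q/\pi\) acts on \(\G\) as multiplication by \(q\), making the eigenvalue of \(\pi+\pidual\) on \(\G\) equal to \(q+1\).

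First I would apply Lemma~\ref{lemma:relations} to the inclusion \(\ZZ[\pi+\pidual]\subseteq\ZZ[\phi]\), taking \((\xi,\xi') = (\pi+\pidual,\phi)\) and \((\lambda,\lambda') = (q+1,\lambda_\phi)\). The first congruence of the lemma is precisely \((q+1-b) + (-c)\lambda_\phi \equiv 0\pmod{N}\), which says \(\vv{b}_1 \in \Lattice\). The second congruence, after collecting the coefficient of \(\lambda_\phi\), reads \((c\N{\phi} - (q+1-b)\Tr{\phi}) + (q+1-b)\lambda_\phi \equiv 0\pmod{N}\), placing \(\vv{b}_2\) in \(\Lattice\) as well.

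Next I would compute the determinant of the sublattice,
\[
    \det(\subgrp{\vv{b}_1,\vv{b}_2}) = (q+1-b)^2 - c(q+1-b)\Tr{\phi} + c^2\N{\phi},
\]
and reconcile it with the point-count formula~\eqref{eq:g2-RM-card}. Equation~\eqref{eq:b-and-c} gives \(\Tr{(\pi+\pidual)} = 2b + c\Tr{\phi}\), while computing the norm of \(\pi+\pidual = c\phi + b\) inside the quadratic ring \(\ZZ[\phi]\) yields \(\N{(\pi+\pidual)} = b^2 + bc\Tr{\phi} + c^2\N{\phi}\). Substituting these into~\eqref{eq:g2-RM-card} and expanding \((q+1-b)^2\) makes the two expressions match term-by-term, so the sublattice has determinant \(\#\A(\FF_q)\). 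When \(\G = \A(\FF_q)\) we have \(N = \#\A(\FF_q) = \det(\Lattice)\), forcing \(\subgrp{\vv{b}_1,\vv{b}_2} = \Lattice\).

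I expect the main obstacle to be clerical rather than conceptual: one must verify that the trace and norm of \(\pi+\pidual\) appearing in the quartic point-count formula~\eqref{eq:g2-RM-card} really do agree with the trace and norm of \(c\phi + b\) computed inside the real quadratic ring \(\ZZ[\phi]\). Since \(\phi\) is Rosati-fixed, \(\ZZ[\phi]\) lies in the real quadratic subfield \(\QQ(\pi+\pidual)\subset\QQ(\pi)\), which is exactly where those invariants are taken in~\eqref{eq:g2-RM-card}, so the two viewpoints coincide and the calculation closes without further input.
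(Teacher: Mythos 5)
Your proof is correct and follows essentially the same route as the paper: same identification of the eigenvalue $q+1$ for $\pi+\pidual$, same application of Lemma~\ref{lemma:relations} to $\ZZ[\pi+\pidual]\subseteq\ZZ[\phi]$ to produce $\vv{b}_1$ and $\vv{b}_2$, and same determinant comparison against Eq.~\eqref{eq:g2-RM-card}. The only (harmless) cosmetic difference is that you close the determinant computation by substituting the explicit trace and norm of $c\phi+b$ in $\ZZ[\phi]$, whereas the paper invokes the trace relation together with the discriminant identity $\Disc{\pi+\pidual}=c^2\Disc{\phi}$; these are equivalent algebraic rearrangements.
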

\begin{proof}
    The proof is almost exactly the same as that of
    Theorem~\ref{th:EC-2d}.
    As we noted above,
    \((\pi + \pidual)\) has eigenvalue \(q+1\) on \(\G\).
    Applying Lemma~\ref{lemma:relations}
    to \(\ZZ[\pi + \pidual] \subset \ZZ[\phi]\),
    we obtain relations
    \begin{align*}
        (q+1 - b)\cdot 1  - c\cdot \lambda_{\phi} 
        & \equiv 0 \pmod{N} 
        \qquad \text{and} \\
        (c\N{\phi} - (q+1-b)\Tr{\phi})\cdot 1 + (q+1-b)\cdot\lambda_\phi 
        & \equiv 0 \pmod{N}
        \ .
    \end{align*}
    The first implies that \(\vv{b}_1\) is in \(\Lattice\),
    the second that \(\vv{b}_2\) is in \(\Lattice\).
    The determinant of \(\subgrp{\vv{b}_1,\vv{b}_2}\)
    is 
    \[
        (q+1-b)^2 + c^2\N{\phi} - (q+1-b)c\Tr{\phi}
        = 
        \#\A(\FF_{q})
        \ ,
    \]
    using Eq.~\eqref{eq:g2-RM-card},
    \(\Tr{(\pi + \pidual)} = c\Tr{\phi} + 2b\),
    and \(\Tr{\phi}^2 - 4\N{\phi} = c^2(\Tr{(\pi + \pidual)}^2 -
    4\N{(\pi + \pidual)})\).
\end{proof}



We apply Theorem~\ref{th:RM-2d} to the explicit real multiplication families
treated in~\cite{Takashima,Kohel--Smith,Gaudry--Kohel--Smith}.
In each case,
the assumption \(\ZZ[\pi+\pidual] \subseteq \ZZ[\phi]\) is fulfilled
because \(\ZZ[\phi]\) is the maximal order of the real subfield of 
the endomorphism algebra.


\begin{example}[Explicit RM by \(\QQ(\sqrt{5})\)]
    Brumer~\cite{Brumer},
    Hashimoto~\cite{Hashimoto},
    Mestre~\cite{Mestre-1},
    and
    Tautz, Top, and Verberkmoes~\cite{Tautz--Top--Verberkmoes}
    have given explicit constructions
    of families of genus 2 curves 
    whose Jacobians have explicit real multiplication 
    by \(\ZZ[(1 + \sqrt{5})/2]\),
    which is the maximal order of \(\QQ(\sqrt{5})\)
    (see Wilson's thesis~\cite{Wilson}
    for a full characterization of all such curves).
    In each case, the curve is equipped with a correspondence
    inducing an explicit endomorphism \(\phi\) on the Jacobian
    satisfying
    \(\cpol{\phi}(T) = T^2 + T - 1\);
    these endomorphisms have been exploited for fast scalar
    multiplication in~\cite{Kohel--Smith} and~\cite{Takashima}.

    Let \(\C/\FF_{q}\) be the reduction mod \(p\) of a curve taken from
    one of these families;
    then \(\Jac{\C}\) inherits the explicit endomorphism \(\phi\),
    and \(\ZZ[\phi] \cong \ZZ[(1 + \sqrt{5})/2]\).
    Since \(\ZZ[(1 + \sqrt{5})/2]\)
    is the maximal order of \(\QQ(\sqrt{5})\),
    we must have \(\ZZ[\pi + \pidual] \subseteq \ZZ[\phi]\);
    so
    \[
        \pi + \pidual = c\phi + b
    \]
    where \(b = \frac{1}{2}(\Tr{(\pi+\pidual)} + c)\)
    and
    \(5c^2 = \Disc{\pi+\pidual}\).

    Putting ourselves in the situation of~\S\ref{sec:general}
    with \((\phi_1,\phi_2) = (1,\phi)\),
    Theorem~\ref{th:RM-2d}
    yields vectors
    \[
        \vv{b}_1 
        =
        \left(
            q+1 - b, -c
        \right)
        \qquad
        \text{and}
        \qquad
        \vv{b}_2 
        =
        \left(
            -c - (q+1 - b), q+1 - b
        \right)
    \]
    in \(\Lattice\).
    Note that \(|c| < 4\sqrt{q/5}\),
    so \(|b| < (2 + 2/\sqrt{5})\sqrt{q}\),
    and
    \[
        \sigma(\vv{b}_1) = \sigma(\vv{b}_2) = \log_2(q+1) 
        .
    \]
\end{example}

\begin{example}
    Mestre~\cite{Mestre-2}
    has constructed a two-parameter family of genus 2 curves
    whose Jacobians have explicit real multiplication by
    \(\ZZ[\sqrt{2}]\)
    (an alternative presentation of these curves for cryptographic
    applications is developed
    in~\cite{Gaudry--Kohel--Smith};
    see Bending's thesis~\cite{Bending} for a full characterization
    of curves with RM by \(\ZZ[\sqrt{2}]\)).
    The efficient endomorphism \(\phi\)
    satisfies \(\cpol{\phi}(T) = T^2 - 2\) in this case,
    so \(\Disc{\phi} = 8\).

    Let \(\C/\FF_{q}\) be the reduction mod \(p\) of a curve taken from
    one of these families;
    \(\Jac{\C}\) inherits the explicit endomorphism \(\phi\),
    and \(\ZZ[\phi] \cong \ZZ[\sqrt{2}]\).
    Since \(\ZZ[\sqrt{2}]\) is the maximal order of \(\QQ(\sqrt{2})\),
    we must have \(\ZZ[\pi + \pidual] \subseteq \ZZ[\phi]\);
    so
    \[
        \pi + \pidual = c\phi + b
    \]
    where \(b = \frac{1}{2}(\Tr{(\pi+\pidual)} + c)\)
    and
    \(2c^2 = \Disc{\pi+\pidual}\).

    Putting ourselves in the situation of~\S\ref{sec:general}
    with \((\phi_1,\phi_2) = (1,\phi)\),
    Theorem~\ref{th:RM-2d}
    yields vectors
    \[
        \vv{b}_1 
        =
        \left(
            q+1 - b, -c
        \right)
        \qquad
        \text{and}
        \qquad
        \vv{b}_2 
        =
        \left(
            -2c, q+1 - b
        \right)
    \]
    in \(\Lattice\).
    For this family,
    \(|c| < 2\sqrt{q/2}\) and \(|b| < (2 + 1/2\sqrt{2})\sqrt{q}\);
    each is much smaller than \(q+1\),
    so as before we have
    \[
        \sigma(\vv{b}_1) = \sigma(\vv{b}_2) = \log_2(q+1) 
        .
    \]
\end{example}



\end{document}